\pdfoutput=1
\documentclass[11pt,a4paper]{amsart}
\usepackage[utf8]{inputenc}

\calclayout

\usepackage{mathtools}
\mathtoolsset{showonlyrefs}

\usepackage{color}
\usepackage[colorlinks=true,urlcolor=blue,anchorcolor=blue,citecolor=blue,filecolor=blue,linkcolor=blue,menucolor=blue,linktocpage=true,pdfa=true,unicode=true,bookmarksopen=true]{hyperref}

\usepackage{mlmodern}
\usepackage[T1]{fontenc}
\usepackage{bbm}
\DeclareMathAlphabet{\mathbfi}{OML}{cmm}{b}{it}

\newtheorem{theorem}{Theorem}
\newtheorem{lemma}[theorem]{Lemma}
\newtheorem{proposition}[theorem]{Proposition}

\let\originalleft\left
\let\originalright\right
\renewcommand{\left}{\mathopen{}\mathclose\bgroup\originalleft}
\renewcommand{\right}{\aftergroup\egroup\originalright}

\newcommand{\total}{\mathop{}\!\mathrm{d}}
\newcommand{\1}{\mathbbm{1}}
\newcommand{\Tr}{\operatorname{Tr}}
\newcommand{\eqend}[1]{\,#1}
\newcommand{\M}{\mathcal{M}}

\newcommand{\stp}{_{*,+}}

\makeatletter
\def\abs{\@ifnextchar[\abs@size\abs@nosize}
\def\abs@size[#1]#2{\mathopen{#1\lvert}{#2}\mathclose{#1\rvert}}
\def\abs@nosize#1{\left\lvert{#1}\right\rvert}
\def\norm{\@ifnextchar[\norm@size\norm@nosize}
\def\norm@size[#1]#2{\mathopen{#1\lVert}{#2}\mathclose{#1\rVert}}
\def\norm@nosize#1{\left\lVert{#1}\right\rVert}
\makeatother

\allowdisplaybreaks

\begin{document}

\title{A generalization of the Powers--St{\o}rmer--Ogata inequality}

\author{Markus B. Fr{\"o}b}
\address{Department Mathematik, Friedrich-Alexander-Universit{\"a}t Erlangen-N{\"u}rnberg, \hfil\linebreak Cauerstra{\ss}e 11, 91058 Erlangen, Germany}
\email{markus.froeb@fau.de}

\thanks{This work has been funded by the Deutsche Forschungsgemeinschaft (DFG, German Research Foundation) --- project no. 574438490 (``Modular Hamiltonian and relative entropy in interacting quantum field theories'').}

\subjclass[2020]{Primary 46L10; Secondary 47A63, 26A48}

\date{14. September 2026}

\begin{abstract}
We show that for any positive operator monotone function $f$ the inequality
\begin{equation}
\int_{[0,\infty)} f(t) \total \norm{ E^{\Delta_{\varphi,\psi}}(t) \xi_\psi }^2 + f'_\infty \varphi\left( \1 - s(\psi) \right) \geq \frac{f(1)}{2} \Bigl( \varphi(\1) + \psi(\1) - \norm{ \varphi - \psi } \Bigr)
\end{equation}
holds, where $\varphi, \psi \in \M\stp$ are two normal positive linear functionals on a von Neumann algebra $\M$, and $\Delta_{\varphi,\psi}$ is the associated relative modular operator. Choosing $f(t) = t^s$ with $s \in [0,1]$, the Powers--St{\o}rmer--Ogata inequality is recovered.
\end{abstract}

\maketitle

Given a von Neumann algebra $\M$ and two normal positive linear functionals $\varphi, \psi \in \M\stp$, the inequality
\begin{equation}
\label{eq:vn-s}
2 \norm[\Big]{ \Delta_{\varphi,\psi}^\frac{s}{2} \xi_\psi }^2 \geq \varphi(\1) + \psi(\1) - \norm{ \varphi - \psi }
\end{equation}
has been shown by Powers and St{\o}rmer~\cite{powersstormer1970} for $s = 1/2$, and extended to $s \in [0,1]$ by Ogata~\cite{ogata2011}. Here, $\Delta_{\varphi,\psi}$ is the relative modular operator associated with $\varphi$ and $\psi$, and $\xi_\psi \in \mathcal{P}$ is the unique representative vector of $\psi$ in the natural positive cone $\mathcal{P}$. The analogous inequality for matrices
\begin{equation}
\label{eq:matrix-s}
2 \Tr\left( A^s B^{1-s} \right) \geq \Tr\left( A + B - \abs{A-B} \right)
\end{equation}
with $A, B \in \mathbb{C}^{n \times n}$, $A, B \geq 0$ was first shown by Audenaert et al.~\cite{audenaertetal2007}, and a simplified proof was given by Ozawa~\cite[Prop.~1.1]{jaksicetal2012}. The matrix inequality~\eqref{eq:matrix-s} has been generalized to
\begin{equation}
2 \Tr\left( f(A) \frac{B}{f(B)} \right) \geq \Tr\left( A + B - \abs{A-B} \right)
\end{equation}
by Trung Hoa, Osaka and Toan~\cite[Thm.~2.1]{trunghoaosakatoan2013}, where $f \colon (0,\infty) \to (0,\infty)$ is any $2n$-matrix monotone function and $B/f(B)$ is defined to be zero on the kernel of $B$. Choosing $f(t) = t^s$, which is well-known to be operator monotone for $s \in [0,1]$, one recovers the inequality~\eqref{eq:matrix-s}.

In this short note, we will show
\begin{theorem}
\label{thm:ineq}
Let $(\M, \mathcal{H}, J, \mathcal{P})$ be a von Neumann algebra in standard form~\cite{haagerup1975}, and let $f \colon [0,\infty) \to [0,\infty)$ be an operator monotone function with $f'_\infty \coloneq \lim_{t \to \infty} f(t)/t$. For any two normal positive linear functionals $\varphi, \psi \in \M\stp$, the inequality
\begin{equation}
\int_0^\infty f(t) \total \norm{ E^{\Delta_{\varphi,\psi}}(t) \xi_\psi }^2 + f'_\infty \varphi\left( \1 - s(\psi) \right) \geq \frac{f(1)}{2} \Bigl( \varphi(\1) + \psi(\1) - \norm{ \varphi - \psi } \Bigr)
\end{equation}
holds, where $s(\psi) \in \M$ is the support projection of $\psi$, $\xi_\psi \in \mathcal{P}$ is the unique representative vector of $\psi$ in the natural positive cone, $\Delta_{\varphi,\psi}$ is the relative modular operator associated with $\varphi$ and $\psi$, and $E^{\Delta_{\varphi,\psi}}$ its spectral measure.
\end{theorem}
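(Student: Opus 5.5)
The plan is to use the integral representation of positive operator monotone functions on $[0,\infty)$,
\begin{equation}
f(t) = f(0) + f'_\infty t + \int_{(0,\infty)} \frac{t(1+\lambda)}{t+\lambda} \total\mu(\lambda)
\end{equation}
with $f(0)\geq 0$ and $\mu$ a positive finite measure. This writes $f$ as a positive combination of the extremal functions $1$, $t$ and $g_\lambda(t) = t(1+\lambda)/(t+\lambda)$. Both sides of the claimed inequality are linear in $f$, and $f(1) = f(0) + f'_\infty + \mu((0,\infty))$. It therefore suffices to prove the inequality separately for $f = 1$, for $f(t) = t$, and for each $g_\lambda$, and then integrate against $\mu$ (Fubini is harmless since everything is positive).

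For $f=1$, the left-hand side is $\norm{\xi_\psi}^2 = \psi(\1)$. Since $\norm{\varphi-\psi} \geq \abs{\varphi(\1) - \psi(\1)}$, we get $\psi(\1) \geq \frac12(\varphi(\1)+\psi(\1)-\norm{\varphi-\psi})$.

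For $f(t)=t$, we use $\norm{\Delta_{\varphi,\psi}^{1/2}\xi_\psi}^2 = \varphi(s(\psi))$. Adding the term $\varphi(\1 - s(\psi))$ gives $\varphi(\1)$, and the same estimate applies. This explains the role of the $f'_\infty$ term: the spectral integral of $t$ only sees $\varphi$ on the support of $\psi$.

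The main case is $g_\lambda$. We must show
\begin{equation}
(1+\lambda) \bigl\langle \xi_\psi, \Delta(\Delta+\lambda)^{-1} \xi_\psi \bigr\rangle \geq \tfrac{1}{2}\bigl(\varphi(\1)+\psi(\1)-\norm{\varphi-\psi}\bigr),
\end{equation}
where $\Delta=\Delta_{\varphi,\psi}$. Mimicking Ozawa's argument, I would use the Jordan decomposition $\varphi - \psi = \omega_+ - \omega_-$ with $\omega_\pm \in \M\stp$ of orthogonal supports. Then the right-hand side equals $\varphi(\1) - \omega_+(\1) = \psi(\1)-\omega_-(\1)$, i.e. the value on $\1$ of the functional $\varphi \wedge \psi := \varphi - \omega_+ = \psi - \omega_-$. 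This functional is positive and satisfies $\varphi\wedge\psi \leq \varphi$ and $\varphi\wedge\psi \leq \psi$.

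The key step is a monotonicity statement: the quadratic form $Q_\lambda(\varphi,\psi) = \langle \xi_\psi, \Delta_{\varphi,\psi}(\Delta_{\varphi,\psi}+\lambda)^{-1}\xi_\psi\rangle$ is jointly monotone in $(\varphi,\psi)$. I would establish this via the variational formula
\begin{equation}
\frac{t}{t+\lambda} = \inf_{x}\bigl(\ldots\bigr),
\end{equation}
more concretely the Pusz--Woronowicz / Kosaki type formula
\begin{equation}
\bigl\langle \xi_\psi, \lambda(\Delta+\lambda)^{-1} \xi_\psi \bigr\rangle = \inf_{x \in \M} \Bigl( \lambda^{-1}\cdot \lambda\,\psi((\1-x)^*(\1-x)) + \varphi(x x^*) \Bigr)
\end{equation}
(up to normalisation). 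This yields $(1+\lambda) Q_\lambda(\varphi,\psi) = (1+\lambda)\sup_x\bigl[\psi(\1) - \psi((\1-x)^*(\1-x)) - \lambda^{-1}\varphi(xx^*)\bigr]$, a supremum of expressions increasing in both $\varphi$ and $\psi$. Hence $Q_\lambda(\varphi,\psi) \geq Q_\lambda(\omega,\omega)$ with $\omega = \varphi\wedge\psi$. Finally, $\Delta_{\omega,\omega}$ restricted to the support acts trivially on $\xi_\omega$ in spectral terms ($\Delta_\omega\xi_\omega=\xi_\omega$), so $(1+\lambda)Q_\lambda(\omega,\omega) = \omega(\1)$, which is the claim.

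I expect the main obstacle to be the rigorous derivation of this variational formula for the relative modular operator with non-faithful $\varphi,\psi$, including the support subtleties (the kernel of $\Delta$ and the projection $s(\psi)$). The idea is to write $\lambda(\Delta+\lambda)^{-1} = \min$ of the form $\lambda\norm{\xi-\eta}^2 + \norm{\Delta^{1/2}\eta}^2$ over $\eta$ in the core $\M\xi_\psi$, using $\norm{\Delta^{1/2}x\xi_\psi}^2=\varphi(s(\psi)xx^*s(\psi))$-type identities. As a fallback, one can appeal to Kosaki's result that $\lambda\mapsto$ such forms are monotone.
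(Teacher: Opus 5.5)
Your reduction to the extremal functions $1$, $t$ and $g_\lambda$, and your treatment of $f=1$ and $f=t$, match the paper. (Your $g_\lambda$ is the paper's $t/(\lambda'+(1-\lambda')t)$ with $\lambda'=\lambda/(1+\lambda)$.)

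The argument for $g_\lambda$ breaks down, however. The functional $\varphi\wedge\psi=\varphi-(\varphi-\psi)_+=\psi-(\varphi-\psi)_-$ is in general \emph{not} positive when $\M$ is noncommutative, so $Q_\lambda(\omega,\omega)$ is not even defined. Take $\M=M_2(\mathbb{C})$, $\varphi=\Tr(A\,\cdot\,)$ and $\psi=\Tr(B\,\cdot\,)$, with $A$ and $B$ the rank-one projections onto $e_1$ and $(e_1+e_2)/\sqrt2$. Then $(A-B)^2=\frac12\1$, so $\varphi\wedge\psi=\Tr\bigl(\frac12(A+B-\abs{A-B})\,\cdot\,\bigr)$ takes the value $\frac14-\frac{1}{2\sqrt2}<0$ on the projection onto $e_2$.

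No other minorant can rescue the idea. A positive $C\le A$ must be $cA$, and testing $cA\le B$ on $e_1-e_2$ forces $c=0$. So $0$ is the only positive functional below both $\varphi$ and $\psi$, while the right-hand side equals $1-1/\sqrt2>0$. Joint monotonicity of $Q_\lambda$ therefore gives nothing better than $Q_\lambda\ge0$; the commutative picture $\min(a,b)$ has no noncommutative counterpart.

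Separately, your displayed variational formulas are garbled. The supremum you wrote equals $\psi(\1)-Q_\lambda=\langle\xi_\psi,\lambda(\Delta+\lambda)^{-1}\xi_\psi\rangle$, not $Q_\lambda$, and it is decreasing in $\varphi$. With Kosaki's correct form $Q_\lambda(\varphi,\psi)=\inf_{x\in\M}\bigl[\psi((\1-x)^*(\1-x))+\lambda^{-1}\varphi(xx^*)\bigr]$, joint monotonicity is immediate. So the support subtleties you flagged are not the real obstacle.

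What is actually needed is a lower bound on the variational expression for each fixed $x\in\M$. In your parametrization this reads $(1+\lambda)\bigl[\psi((\1-x)^*(\1-x))+\lambda^{-1}\varphi(xx^*)\bigr]\ge\frac12\bigl(\varphi(\1)+\psi(\1)-\norm{\varphi-\psi}\bigr)$, which is exactly the paper's Prop.~\ref{prop:ineq}.

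Since common minorants fail, the paper uses the common \emph{majorant} $\chi=\varphi+(\varphi-\psi)_-=\psi+(\varphi-\psi)_+$, which is always positive. The steps are:
\begin{itemize}
\item After compressing to $s(\chi)$, it writes $\rho(x^*y)=(x\xi_\chi,T_\rho y\xi_\chi)$ with $0\le T_\rho\le\1$ in $\M'$, for $\rho=\varphi,\psi,(\varphi-\psi)_\pm$.
\item It uses the orthogonal supports of $(\varphi-\psi)_\pm$ to get $(T_-\xi_\chi,T_+\xi_\chi)=0$.
\item It completes a square against the vectors $V_\psi T_\varphi\xi_\chi$ and $V_\varphi T_\psi\xi_\chi$. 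This yields the bound $\chi(\1)-\norm{\varphi-\psi}=\frac12\bigl(\varphi(\1)+\psi(\1)-\norm{\varphi-\psi}\bigr)$.
\end{itemize}
This is also the real mechanism in Ozawa's matrix proof, which you meant to mimic. It compares $A$ and $B$ with their common majorant $B+(A-B)_+$ via operator monotonicity, never with a minimum.
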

The Powers--St{\o}rmer--Ogata inequality~\eqref{eq:vn-s} follows as in the matrix case by taking $f(t) = t^s$. The proof of the theorem involves a lemma of Kosaki:
\begin{lemma}[{\cite[Lemma~1.1]{kosaki1986}}]
\label{lemma:inf}
In the setting of Thm.~\ref{thm:ineq}, for each $t > 0$ it holds that
\begin{equation}
\left( \xi_\psi, \frac{\Delta_{\varphi,\psi}}{t + \Delta_{\varphi,\psi}} \xi_\psi \right) = \inf_{x \in \M} \left[ \psi\left( (\1-x^*) (\1-x) \right) + \frac{1}{t} \varphi\left( x x^* \right) \right] \eqend{.}
\end{equation}
\end{lemma}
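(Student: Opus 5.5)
The plan is to turn both terms on the right-hand side into Hilbert-space norms, so that the infimum becomes the minimum of a quadratic form in $\Delta_{\varphi,\psi}$. That minimum can then be computed by spectral calculus.

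\emph{Step 1: reduce to $x = x\,s(\psi)$.} Since $s(\psi)\xi_\psi = \xi_\psi$, we have
\begin{equation}
\psi\bigl((\1-x^*)(\1-x)\bigr) = \norm{\xi_\psi - x\xi_\psi}^2 ,
\end{equation}
and this depends only on $x\,s(\psi)$. On the other hand,
\begin{equation}
\varphi(xx^*) = \varphi\bigl(x s(\psi) x^*\bigr) + \varphi\bigl(x(\1-s(\psi))x^*\bigr) \geq \varphi\bigl(x s(\psi) x^*\bigr) .
\end{equation}
So replacing $x$ by $x\,s(\psi)$ never increases the functional. It therefore suffices to take the infimum over $x$ with $x = x\,s(\psi)$.

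\emph{Step 2: express the $\varphi$-term through $\Delta_{\varphi,\psi}$.} Use Araki's definition of the relative modular operator. The map $S_{\varphi,\psi}$ is given by $S_{\varphi,\psi}(x\xi_\psi + \eta) = s(\psi)x^*\xi_\varphi$ for $\eta \in (\1 - s_{\M'}(\psi))\mathcal{H}$, and $\Delta_{\varphi,\psi} = S_{\varphi,\psi}^*\bar S_{\varphi,\psi}$. Hence, for such $x$,
\begin{equation}
\varphi\bigl(x s(\psi) x^*\bigr) = \norm{s(\psi)x^*\xi_\varphi}^2 = \norm[\big]{\Delta_{\varphi,\psi}^{1/2} x\xi_\psi}^2 .
\end{equation}
Writing $\zeta = x\xi_\psi$, the infimum becomes
\begin{equation}
\inf_{\zeta \in \M\xi_\psi} \Bigl[ \norm{\xi_\psi - \zeta}^2 + \tfrac{1}{t}\norm[\big]{\Delta_{\varphi,\psi}^{1/2}\zeta}^2 \Bigr] .
\end{equation}

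\emph{Step 3: minimize over the form domain.} Over all of $\operatorname{dom}\Delta_{\varphi,\psi}^{1/2}$, spectral calculus reduces the problem to a pointwise one. At a spectral value $\lambda$ one minimizes $\abs{1-z}^2 + \lambda\abs{z}^2/t$ over $z \in \mathbb{C}$. The minimizer is $z = t/(t+\lambda)$ and the minimal value is $\lambda/(t+\lambda)$. Thus the minimizer is $\zeta_0 = t(t+\Delta_{\varphi,\psi})^{-1}\xi_\psi$, and the minimum is $\bigl(\xi_\psi, \Delta_{\varphi,\psi}(t+\Delta_{\varphi,\psi})^{-1}\xi_\psi\bigr)$. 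The same value is obtained by completing the square with the operator $t + \Delta_{\varphi,\psi}$. This gives the inequality "$\geq$" of the lemma.

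\emph{Step 4: equality via the core property.} For the reverse inequality, note that $\M\xi_\psi$, together with the irrelevant complement, is a core for $\bar S_{\varphi,\psi}$. It is therefore a core for $\Delta_{\varphi,\psi}^{1/2}$, so the form $\norm{\xi_\psi-\zeta}^2 + t^{-1}\norm{\Delta_{\varphi,\psi}^{1/2}\zeta}^2$ is continuous in the graph norm. Approximating $\zeta_0$ in the graph norm by vectors $x\xi_\psi$ then shows that the infimum is attained in the limit.

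The main obstacle is the careful handling of supports and non-faithfulness in the definition of $S_{\varphi,\psi}$, and the justification of this core property. Everything else is elementary spectral calculus.
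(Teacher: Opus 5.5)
The paper gives no proof of this lemma; it only cites Kosaki. Your argument is correct and is the standard proof of this variational formula. After the harmless replacement $x \mapsto x\,s(\psi)$, you rewrite the functional as $\norm{\xi_\psi-\zeta}^2+t^{-1}\norm{\Delta_{\varphi,\psi}^{1/2}\zeta}^2$ with $\zeta=x\xi_\psi$. Completing the square with $(t+\Delta_{\varphi,\psi})^{1/2}$ gives the lower bound. The fact that the domain of $S_{\varphi,\psi}$ is a core for $\Delta_{\varphi,\psi}^{1/2}$ then lets you approach the minimizer $\zeta_0=t(t+\Delta_{\varphi,\psi})^{-1}\xi_\psi$.

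The one step you leave vague, discarding the ``irrelevant complement'' in Step 4, closes in one line. Suppose $x_n\xi_\psi+\eta_n\to\zeta_0$ in graph norm, with $\eta_n\in(\1-P)\mathcal{H}$ and $P$ the projection onto $\overline{\M\xi_\psi}$. By definition $\bar S_{\varphi,\psi}(x_n\xi_\psi+\eta_n)=S_{\varphi,\psi}(x_n\xi_\psi)$, so the $\Delta_{\varphi,\psi}^{1/2}$-terms at the two vectors coincide. Also $\eta_n\perp(\1-x_n)\xi_\psi$. Hence the functional at $x_n\xi_\psi$ equals its value at $x_n\xi_\psi+\eta_n$ minus $\norm{\eta_n}^2$. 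Its $\limsup$ is therefore at most the minimum, which gives ``$\leq$''.
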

In addition, we employ one of the integral representations of operator monotone functions due to Löwner:
\begin{theorem}[{\cite[Thm.~4.9]{hansen2013}}]
\label{thm:mon}
For any operator monotone function $f \colon [0,\infty) \to [0,\infty)$, there exists a bounded positive measure $\mu \colon [0,1] \to [0,\infty)$ such that
\begin{equation}
f(t) = \int_{(0,1)} \frac{t}{\lambda + (1-\lambda) t} \total \mu(\lambda) + \mu(\{0\}) + t \mu(\{1\}) \eqend{.}
\end{equation}
\end{theorem}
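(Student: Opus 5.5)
The plan is to derive this representation from Löwner's theorem in its analytic form, and then to change variables so that the measure lives on $[0,1]$.

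\emph{Step 1: Pick function.} I will first use that an operator monotone $f$ on $(0,\infty)$ extends analytically to the upper half-plane $\mathbb{C}_+$ with $\operatorname{Im} f \geq 0$ there, i.e.\ $f$ is a Pick function. It also continues analytically across $(0,\infty)$ by reflection. This is the deep direction of Löwner's theorem. I would obtain it from the positivity of all Löwner matrices $\bigl[\frac{f(t_i)-f(t_j)}{t_i-t_j}\bigr]$, via the Bendat--Sherman/Korányi argument, rather than reprove it from scratch.

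\emph{Step 2: Nevanlinna representation.} The Nevanlinna representation of Pick functions then gives
\begin{equation}
f(z) = \alpha + \beta z + \int_{\mathbb{R}} \Bigl( \frac{1}{s-z} - \frac{s}{1+s^2} \Bigr) \total \nu(s)
\end{equation}
with $\beta \geq 0$ and $\int \total\nu(s)/(1+s^2) < \infty$. Because $f$ is real-analytic on $(0,\infty)$, the measure $\nu$ carries no mass there, so $\operatorname{supp}\nu \subseteq (-\infty,0]$.

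\emph{Step 3: Using $f \geq 0$ on $[0,\infty)$.} I write $s=-u$ with $u \geq 0$. Nonnegativity and monotonicity of $f$ on $[0,\infty)$ imply that $f(0^+) = \lim_{t\to 0^+} f(t)$ is finite. By monotone convergence, letting $t \to 0^+$ in $f(t)-f(0^+)$ shows $\int \total\nu(u)/(u(1+u)) < \infty$; in particular $\nu(\{0\}) = 0$. This is the step where the interplay of finiteness and integrability has to be checked carefully. It lets me regroup the integrand as $\frac{1}{u} - \frac{1}{u+t} = \frac{t}{u(u+t)}$. Setting $\total\sigma(u) = \total\nu(u)/u$, I obtain
\begin{equation}
f(t) = a + bt + \int_{(0,\infty)} \frac{t}{u+t} \total\sigma(u), \qquad \int_{(0,\infty)} \frac{\total\sigma(u)}{1+u} < \infty ,
\end{equation}
with $a = f(0) \geq 0$ and $b = \beta = \lim_{t\to\infty} f(t)/t \geq 0$. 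The value of $b$ follows by dominated convergence, since $t/(u+t)$ is bounded by $1$ and $\frac{1}{t}\cdot\frac{t}{u+t} \to 0$.

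\emph{Step 4: Change of variables.} I substitute $u = \lambda/(1-\lambda)$ with $\lambda \in (0,1)$. This gives the identity
\begin{equation}
\frac{t}{u+t} = \frac{(1-\lambda)\, t}{\lambda + (1-\lambda) t} .
\end{equation}
I then define $\mu$ on $(0,1)$ as the pushforward of $(1+u)^{-1}\total\sigma(u)$, since $1-\lambda = (1+u)^{-1}$. This $\mu$ is a finite measure exactly because $\int \total\sigma/(1+u) < \infty$. Finally I set $\mu(\{0\}) = a$ and $\mu(\{1\}) = b$, which yields the claimed formula with $\mu$ bounded and positive.

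The main obstacle is Step 1, the analytic continuation in Löwner's theorem. As a fallback I would follow Hansen's route instead. First, $f \geq 0$ operator monotone on $[0,\infty)$ implies $f$ is operator concave. Then the normalized cone $\{f : f(1) = 1\}$ is convex and compact in the topology of pointwise convergence. One identifies its extreme points as the functions $t/(\lambda + (1-\lambda)t)$ for $\lambda \in [0,1]$, and Choquet's theorem (or Krein--Milman) then produces the measure $\mu$ directly.
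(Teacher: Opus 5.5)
Your proof is correct on $(0,\infty)$, but it takes a genuinely different route from the one behind the paper's citation. The paper gives no argument of its own; it quotes Hansen's Thm.~4.9, which is proved essentially along the lines of your fallback. There one works directly with the compact convex set of normalized positive operator monotone functions on the half-line, identifies its extreme points as $t/(\lambda+(1-\lambda)t)$, and obtains $\mu$ from Krein--Milman/Choquet, with no analytic continuation and no Nevanlinna theory.

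Your main route instead uses the deep direction of Löwner's theorem as a black box and then does the bookkeeping, all of which checks out:
\begin{itemize}
\item Stieltjes inversion gives $\operatorname{supp}\nu\subseteq(-\infty,0]$.
\item Monotone convergence in $f(t)-f(s)=\beta(t-s)+\int\frac{t-s}{(u+s)(u+t)}\total\nu(u)$ as $s\to0^+$ gives $\nu(\{0\})=0$ and $\int\total\nu(u)/(u(1+u))<\infty$.
\item Taking $\mu$ to be the pushforward of $(1+u)^{-1}\total\sigma(u)$ under $u\mapsto\lambda=u/(1+u)$ reproduces the stated formula, because $(1+u)\frac{t}{u+t}=\frac{t}{\lambda+(1-\lambda)t}$.
\end{itemize}
The classical route gives extra information for free: analyticity of $f$, $\nu$ as the boundary value of $\operatorname{Im}f$ (hence uniqueness of $\mu$), and the atoms directly as $\mu(\{0\})=\lim_{t\to0^+}f(t)$ and $\mu(\{1\})=\lim_{t\to\infty}f(t)/t$. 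But its substance is outsourced to Löwner's theorem. Hansen's route is self-contained and avoids complex analysis, at the cost of the operator-concavity and extreme-point analysis.

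Two corrections.

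\emph{The constant at $0$.} The constant you obtain is $a=f(0^+)$, not $f(0)$. The Nevanlinna representation only sees $(0,\infty)$, and unless continuity at $0$ is part of your definition, an operator monotone $f\colon[0,\infty)\to[0,\infty)$ can jump at $0$. For example, $f=\1_{(0,\infty)}$ is operator monotone: $f(A)$ is the support projection of $A$, and $0\le A\le B$ forces $\ker B\subseteq\ker A$. For any finite $\mu$ the right-hand side of the representation is continuous at $t=0$, so no $\mu$ reproduces $f(0)=0$. Your argument therefore proves the identity for $t>0$, which is the most the statement can assert without a continuity assumption at $0$.

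\emph{The dominating function for $b$.} The constant $1$ is not an admissible dominating function, since $\sigma$ may have infinite mass (for $f(t)=\sqrt t$, $\total\sigma(u)=\pi^{-1}u^{-1/2}\total u$). Use $\frac{1}{u+t}\le\frac{1}{u+1}$ for $t\ge1$ instead.

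Neither point affects the existence of $\mu$.
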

Finally, we need the following bound:
\begin{proposition}
\label{prop:ineq}
In the setting of Thm.~\ref{thm:ineq}, for any $x \in \M$ and any $\lambda \in (0,1)$ it holds that
\begin{equation}
\frac{\varphi\left( x x^* \right)}{\lambda} + \frac{\psi\left( (\1-x^*) (\1-x) \right)}{1-\lambda} \geq \frac{1}{2} \Bigl( \varphi(\1) + \psi(\1) - \norm{ \varphi - \psi } \Bigr) \eqend{.}
\end{equation}
\end{proposition}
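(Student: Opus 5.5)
The plan is to first drop $\lambda$ and then reduce to a two-term estimate. Since $\lambda, 1-\lambda \in (0,1)$, we have $1/\lambda \geq 1$ and $1/(1-\lambda) \geq 1$, and both $\varphi(xx^*)$ and $\psi((\1-x^*)(\1-x))$ are nonnegative. So the left-hand side is at least $\varphi(xx^*) + \psi((\1-x^*)(\1-x))$. It therefore suffices to prove, for every $x \in \M$,
\begin{equation}
\varphi(xx^*) + \psi\left( (\1-x^*)(\1-x) \right) \geq \frac{1}{2} \Bigl( \varphi(\1) + \psi(\1) - \norm{\varphi - \psi} \Bigr) \eqend{.}
\end{equation}
This is the von Neumann algebra analogue of the elementary matrix fact $\Tr(AXX^*) + \Tr(B(\1-X)^*(\1-X)) \geq \Tr\min$-type bounds.

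Next I use the Jordan decomposition $\varphi - \psi = \omega_+ - \omega_-$, with $\omega_\pm \in \M\stp$ of orthogonal supports $p$ and $\1-p$, so that $\norm{\varphi-\psi} = \omega_+(\1) + \omega_-(\1)$. Set $\rho \coloneq \varphi - \omega_+ = \psi - \omega_-$. Then $\rho \leq \varphi$ and $\rho \leq \psi$, and $\rho$ is positive because $\varphi \geq \omega_+$: indeed $\varphi(\cdot) \geq \varphi(p\cdot p) \geq \omega_+$ on positive elements, using $\omega_-(p \cdot p) = 0$ and the cross terms. Alternatively one can check positivity directly via $\rho(y^*y) = \varphi(y^*y) - (\varphi-\psi)(p y^*y p)$-type manipulations. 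Moreover
\begin{equation}
2\rho(\1) = \varphi(\1) + \psi(\1) - \omega_+(\1) - \omega_-(\1) = \varphi(\1) + \psi(\1) - \norm{\varphi-\psi} \eqend{.}
\end{equation}
Hence the left-hand side is bounded below by $\rho(xx^*) + \rho((\1-x^*)(\1-x))$, and it remains to show that this is $\geq \rho(\1)$.

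The final step is an operator inequality. The obstacle is that $xx^*$ and $(\1-x)^*(\1-x)$ are not in matching order, i.e.\ $xx^*$ rather than $x^*x$. I handle this via the GNS/standard representation $\rho(y) = (\eta, y\eta)$, which gives
\begin{equation}
\rho(xx^*) + \rho\left( (\1-x^*)(\1-x) \right) = \norm{x^*\eta}^2 + \norm{(\1-x)\eta}^2 \eqend{.}
\end{equation}
These norms mix $x^*$ and $x$, so the triangle inequality does not close directly. Here I fall back on the weaker target $\geq \rho(\1)/2$, but I still need a pairing.

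My first attempt would be $\norm{x\eta}^2 + \norm{(\1-x)\eta}^2 \geq \frac12\norm{\eta}^2$. That holds, but it involves $\norm{x\eta}$, not $\norm{x^*\eta}$. So instead I use the standard form: $\xi_\rho \in \mathcal{P}$ satisfies $\norm{x^*\xi_\rho} = \norm{J x J \xi_\rho} = \norm{x' \xi_\rho}$ with $x' = JxJ \in \M'$. Then
\begin{equation}
\norm{x'\xi_\rho} + \norm{(\1-x)\xi_\rho} \geq \norm{\xi_\rho} \,?
\end{equation}
This still fails to combine, since $x'$ and $x$ act differently. So the genuine obstacle is the ordering, and I would expect the actual argument to need the weights $1/\lambda$ and $1/(1-\lambda)$ more cleverly than simply dropping them. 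Concretely, I would split $\1 = x + (\1-x)$ in the form $\rho(\1) = \rho(x) + \rho(\1-x)$ and apply Cauchy--Schwarz to each term:
\begin{equation}
\abs{\rho(x)} \leq \rho(xx^*)^{1/2}\rho(\1)^{1/2}, \qquad \abs{\rho(\1-x)} \leq \rho\left( (\1-x)^*(\1-x) \right)^{1/2}\rho(\1)^{1/2} \eqend{.}
\end{equation}
Writing $a = \rho(xx^*)^{1/2}$, $b = \rho((\1-x)^*(\1-x))^{1/2}$ and $c = \rho(\1)^{1/2}$, this gives $c \leq a + b$, hence $a^2 + b^2 \geq c^2/2$. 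This yields exactly the desired bound $\frac12 \cdot 2\rho(\1)/2 \cdot 2$. More precisely, $a^2 + b^2 \geq \rho(\1)/2 = \frac14(\varphi(\1) + \psi(\1) - \norm{\varphi-\psi})$, which is off by a factor $2$. The plan to recover the missing factor is to keep the weights: minimizing $a^2/\lambda + b^2/(1-\lambda)$ subject to $a + b \geq c$ gives exactly $c^2$. This yields $\rho(\1) = \frac12(\ldots)$, as required.
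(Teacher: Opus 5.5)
\textbf{Where the proposal breaks.} The step that fails is positivity of $\rho = \varphi - (\varphi-\psi)_+ = \psi - (\varphi-\psi)_-$. Your whole argument rests on the Cauchy--Schwarz bound $\abs{\rho(x)}^2 \le \rho(xx^*)\rho(\1)$, and without positivity it is unavailable. Your justification $\varphi(y) \ge \varphi(pyp)$ for $y \ge 0$ is false in general.

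In fact no positive functional can play the role of $\rho$. Take $\M = M_2(\mathbb{C})$, $\varphi = \Tr(A\,\cdot)$, $\psi = \Tr(B\,\cdot)$, with $A$, $B$ the rank-one projections onto $e_1$ and $(e_1+e_2)/\sqrt{2}$. Any $C$ with $0 \le C \le A$ and $C \le B$ satisfies $\ker A + \ker B \subseteq \ker C$. Since $\ker A = \mathbb{C}e_2$ and $\ker B = \mathbb{C}(e_1-e_2)$ span $\mathbb{C}^2$, this forces $C = 0$. Yet $A-B$ has eigenvalues $\pm 1/\sqrt{2}$, so $\rho(\1) = \frac12(\varphi(\1)+\psi(\1)-\norm{\varphi-\psi}) = 1-1/\sqrt{2} > 0$. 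Since $\rho \le \varphi$ and $\rho \le \psi$, $\rho$ is therefore not positive.

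So the real obstruction is not the $xx^*$ versus $x^*x$ ordering; for a positive $\rho$ both of your Cauchy--Schwarz bounds are fine. The obstruction is that the noncommutative ``minimum'' of $\varphi$ and $\psi$ is not positive. As written, your argument proves the proposition only for abelian $\M$.

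Your opening reduction, dropping the weights, is also to a false inequality: $\varphi=\psi$ a state and $x = \frac12\1$ gives $\frac12 < 1$. You effectively retract it later. Your weighted endgame $(a+b)^2 \le a^2/\lambda + b^2/(1-\lambda)$ is correct; it is what the paper's completing-the-square step does.

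\textbf{The missing idea.} Use the common majorant, which is positive, instead of the minorant. Set $\chi = \varphi + (\varphi-\psi)_- = \psi + (\varphi-\psi)_+$ and make it faithful by compression. The paper then takes Radon--Nikodym derivatives $T_\rho = V_\rho^* V_\rho \in \M'$ relative to $\chi$. These satisfy $T_\varphi + T_- = T_\psi + T_+ = \1$, and, by orthogonality of the Jordan supports, $(T_-\xi_\chi, T_+\xi_\chi) = 0$.

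In place of $\rho$, the paper uses the non-positive functional $L(x) = (T_\psi x\xi_\chi, T_\varphi\xi_\chi)$. It satisfies $L(\1) = \frac12(\varphi(\1)+\psi(\1)-\norm{\varphi-\psi})$. It also has two vector representations, $L(x) = (x\xi_\psi,\alpha) = (\beta, x^*\xi_\varphi)$, with $\norm{\alpha}^2, \norm{\beta}^2 \le L(\1)$. These give the one-sided bounds $\abs{L(x)} \le L(\1)^{1/2}\varphi(xx^*)^{1/2}$ and $\abs{L(\1-x)} \le L(\1)^{1/2}\psi((\1-x^*)(\1-x))^{1/2}$, which is all your final step needs.

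In the abelian case, $\max(T_\varphi,T_\psi) = \1$ forces $T_\varphi T_\psi = \min(T_\varphi,T_\psi)$. So $L$ reduces, up to complex conjugation, to your $\rho$: your argument is the commutative special case of the paper's.
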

Thm.~\ref{thm:ineq} then follows easily:
\begin{proof}[Proof of Thm.~\ref{thm:ineq}]
Since the measure $\mu$ in Thm.~\ref{thm:mon} is bounded, we obtain
\begin{equation}
f(t) \leq \mu(\{0\}) + 2 \int_{(0,1/2]} \total \mu(\lambda) + t \left[ \mu(\{1\}) + 2 \int_{(1/2,1)} \total \mu(\lambda) \right] \eqend{,}
\end{equation}
such that $f(t)$ is bounded by a linear function, say $f(t) \leq a + b t$ with $a,b \geq 0$. Moreover, the dominated convergence theorem shows that
\begin{equation}
f'_\infty = \lim_{t \to \infty} \frac{f(t)}{t} = \int_{(0,1)} \lim_{t \to \infty} \frac{1}{\lambda + (1-\lambda) t} \total \mu(\lambda) + \mu(\{1\}) = \mu(\{1\}) \eqend{.}
\end{equation}
Hence, it follows that
\begin{align}
&\int_0^\infty f(t) \total \norm{ E^{\Delta_{\varphi,\psi}}(t) \xi_\psi }^2 + f'_\infty \varphi\left( \1 - s(\psi) \right) \\
&\quad\leq a \int_0^\infty \total \norm{ E^{\Delta_{\varphi,\psi}}(t) \xi_\psi }^2 + b \int_0^\infty t \total \norm{ E^{\Delta_{\varphi,\psi}}(t) \xi_\psi }^2 + f'_\infty \varphi\left( \1 - s(\psi) \right) \\
&\quad= a \psi(\1) + b \varphi\left( s(\psi) \right) + f'_\infty \varphi\left( \1 - s(\psi) \right) \eqend{,}
\end{align}
and the integral is finite. Using Thm.~\ref{thm:mon}, Tonelli's theorem, and Lemma~\ref{lemma:inf} with $t = \lambda/(1-\lambda)$, we therefore obtain
\begin{align}
&\int_0^\infty f(t) \total \norm{ E^{\Delta_{\varphi,\psi}}(t) \xi_\psi }^2 + f'_\infty \varphi\left( \1 - s(\psi) \right) - \mu(\{0\}) \psi(\1) - \mu(\{1\}) \varphi(\1) \\
&\quad= \int_0^\infty \int_{(0,1)} \frac{t}{\lambda + (1-\lambda) t} \total \mu(\lambda) \total \norm{ E^{\Delta_{\varphi,\psi}}(t) \xi_\psi }^2 \\
&\quad= \int_{(0,1)} \left( \xi_\psi, \frac{\Delta_{\varphi,\psi}}{\lambda + (1-\lambda) \Delta_{\varphi,\psi}} \xi_\psi \right) \total \mu(\lambda) \\
&\quad= \int_{(0,1)} \inf_{x \in \M} \left[ \frac{\psi\left( (\1-x^*) (\1-x) \right)}{1-\lambda} + \frac{\varphi\left( x x^* \right)}{\lambda} \right] \total \mu(\lambda) \eqend{.}
\end{align}
Using then Prop.~\ref{prop:ineq}, it follows that
\begin{align}
&\int_0^\infty f(t) \total \norm{ E^{\Delta_{\varphi,\psi}}(t) \xi_\psi }^2 + f'_\infty \varphi\left( \1 - s(\psi) \right) - \mu(\{0\}) \psi(\1) - \mu(\{1\}) \varphi(\1) \\
&\quad\geq \frac{1}{2} \Bigl( \varphi(\1) + \psi(\1) - \norm{ \varphi - \psi } \Bigr) \int_{(0,1)} \total \mu(\lambda) \eqend{,}
\end{align}
and since also
\begin{align}
\frac{1}{2} \Bigl( \varphi(\1) + \psi(\1) - \norm{ \varphi - \psi } \Bigr) = \psi(\1) - ( \varphi - \psi )_-(\1) \leq \psi(\1) \eqend{,} \\
\frac{1}{2} \Bigl( \varphi(\1) + \psi(\1) - \norm{ \varphi - \psi } \Bigr) = \varphi(\1) - ( \varphi - \psi )_+(\1) \leq \varphi(\1) \eqend{,}
\end{align}
we obtain the conclusion:
\begin{align}
&\int_0^\infty f(t) \total \norm{ E^{\Delta_{\varphi,\psi}}(t) \xi_\psi }^2 + f'_\infty \varphi\left( \1 - s(\psi) \right) \\
&\quad\geq \frac{1}{2} \Bigl( \varphi(\1) + \psi(\1) - \norm{ \varphi - \psi } \Bigr) \left[ \mu(\{0\}) + \mu(\{1\}) + \int_{(0,1)} \total \mu(\lambda) \right] \\
&= \frac{f(1)}{2} \Bigl( \varphi(\1) + \psi(\1) - \norm{ \varphi - \psi } \Bigr) \eqend{.}
\end{align}
\end{proof}
It remains to show the Proposition.
\begin{proof}[Proof of Prop.~\ref{prop:ineq}]
Both sides of the inequality are unchanged when considering the compression $p \M p$ with $p \coloneq s(\varphi + \psi) = s(\varphi) \vee s(\psi)$ instead of $\M$, where $s(\varphi)$ and $s(\psi)$ are the support projections of $\varphi$ and $\psi$ in $\M$. Let $\chi \coloneq \varphi + ( \varphi - \psi )_- = \psi + ( \varphi - \psi )_+$. Since $2 \chi \geq \varphi + \psi$, by compressing with $p$ we may thus assume that $\chi$ is faithful, such that the representative vector $\xi_\chi \in \mathcal{P}$ of $\chi$ in the positive cone is cyclic and separating for $\M$. We use the convention that the scalar product in $\mathcal{H}$ is linear in the second entry.

For any $\rho \in \M\stp$ with $\rho \leq \chi$, let $\xi_\rho \in \mathcal{P}$ be its representative vector in the positive cone. The map $V_\rho$ defined by $V_\rho( x \xi_\chi ) \coloneq x \xi_\rho$ for $x \in \M$ is contractive:
\begin{equation}
\norm{ V_\rho( x \xi_\chi ) } = \norm{ x \xi_\rho }^2 = \rho\left( x^* x \right) \leq \chi\left( x^* x \right) = \norm{ x \xi_\chi }^2 \eqend{,}
\end{equation}
and since $\M \xi_\chi$ is dense in $\mathcal{H}$, it extends by continuity to a contraction $V_\rho \colon \mathcal{H} \to \mathcal{H}$. On $\M \xi_\chi$, we compute
\begin{equation}
V_\rho( x y \xi_\chi ) = x y \xi_\rho = x ( V_\rho y \xi_\chi ) \eqend{,}
\end{equation}
such that $V_\rho$ commutes with the action of $\M$, and because it is a bounded operator on $\mathcal{H}$, it is thus an element of $\M'$. It follows that $T_\rho \coloneq V_\rho^* V_\rho \in \M'$ with $0 \leq T_\rho \leq \1$, and we obtain a linear map $\rho \mapsto T_\rho$ given by
\begin{equation}
\rho\left( x^* y \right) = \left( x \xi_\rho, y \xi_\rho \right) = \Bigl( V_\rho( x \xi_\chi ), V_\rho( y \xi_\chi ) \Bigr) = \left( x \xi_\chi, V_\rho^* V_\rho y \xi_\chi \right) = \left( x \xi_\chi, T_\rho y \xi_\chi \right) \eqend{.}
\end{equation}

In the following, we consider $T_\varphi$, $T_\psi$, $T_\chi$, and $T_\pm \coloneq T_{( \varphi - \psi )_\pm}$, which are related by
\begin{equation}
\1 = T_\chi = T_\varphi + T_- = T_\psi + T_+
\end{equation}
through the definition of $\chi$. Let $p_\pm \coloneq s( ( \varphi - \psi )_\pm )$ be the support projections of $( \varphi - \psi )_\pm$ in $\M$. Then we have
\begin{equation}
\norm{ T_\pm^\frac{1}{2} (\1 - p_\pm) \xi_\chi }^2 = \left( (\1 - p_\pm) \xi_\chi, T_\pm (\1 - p_\pm) \xi_\chi \right) = ( \varphi - \psi )_\pm\left( (\1 - p_\pm) \right) = 0 \eqend{,}
\end{equation}
and from this $0 = T_\pm (\1 - p_\pm) \xi_\chi = (\1 - p_\pm) T_\pm \xi_\chi$, or $T_\pm \xi_\chi = p_\pm T_\pm \xi_\chi$. Using the orthogonality $p_- p_+ = 0$ of the Jordan decomposition, it thus follows that
\begin{equation}
\left( T_- \xi_\chi, T_+ \xi_\chi \right) = \left( p_- T_- \xi_\chi, p_+ T_+ \xi_\chi \right) = \chi\left( T_- p_- p_+ T_+ \right) = 0 \eqend{.}
\end{equation}

Define now the vectors
\begin{equation}
\alpha \coloneq V_\psi T_\varphi \xi_\chi \eqend{,} \quad \beta \coloneq V_\varphi T_\psi \xi_\chi
\end{equation}
and for $x \in \M$ the functional
\begin{equation}
L(x) \coloneq \left( T_\psi x \xi_\chi, T_\varphi \xi_\chi \right) \eqend{.}
\end{equation}
Using that $V_\rho, T_\rho \in \M'$, a short computation yields
\begin{equation}
L(x) = \left( x \xi_\psi, \alpha \right) = \left( \beta, x^* \xi_\varphi \right) \eqend{.}
\end{equation}
Moreover, we obtain
\begin{align}
L(\1) &= \left( T_\psi \xi_\chi, T_\varphi \xi_\chi \right) = \left( ( \1 - T_+ ) \xi_\chi, ( \1 - T_- ) \xi_\chi \right) \\
&= \norm{ \xi_\chi }^2 - \left( \xi_\chi, T_+ \xi_\chi \right) - \left( \xi_\chi, T_- \xi_\chi \right) + \left( T_+ \xi_\chi, T_- \xi_\chi \right) \\
&= \chi(\1) - ( \varphi - \psi )_+(\1) - ( \varphi - \psi )_-(\1) \in \mathbb{R} \eqend{,}
\end{align}
\begin{align}
L(\1) - \norm{ \alpha }^2 &= \left( ( \1 - T_+ ) T_- \xi_\chi, ( \1 - T_- ) \xi_\chi \right) \\
&= \norm{ ( \1 - T_- )^\frac{1}{2} T_-^\frac{1}{2} \xi_\chi }^2 + \norm{ T_+^\frac{1}{2} T_- \xi_\chi }^2 - \left( T_- \xi_\chi, T_+ \xi_\chi \right) \geq 0 \eqend{,}
\end{align}
and
\begin{align}
L(\1) - \norm{ \beta }^2 &= \left( ( \1 - T_+ ) \xi_\chi, ( \1 - T_- ) T_+ \xi_\chi \right) \\
&= \norm{ ( \1 - T_+ )^\frac{1}{2} T_+^\frac{1}{2} \xi_\chi }^2 + \norm{ T_-^\frac{1}{2} T_+ \xi_\chi }^2 - \left( T_- \xi_\chi, T_+ \xi_\chi \right) \geq 0 \eqend{,}
\end{align}
where we used that $0 \leq T_\pm \leq \1$ and that $\left( T_- \xi_\chi, T_+ \xi_\chi \right) = 0$.

For $\lambda \in (0,1)$, it thus follows that
\begin{align}
0 &\leq \norm{ \lambda^\frac{1}{2} \beta - \lambda^{-\frac{1}{2}} x^* \xi_\varphi }^2 + \norm{ (1-\lambda)^{-\frac{1}{2}} (\1-x) \xi_\psi - (1-\lambda)^\frac{1}{2} \alpha }^2 \\
&= \lambda \norm{ \beta }^2 + \lambda^{-1} \norm{ x^* \xi_\varphi }^2 - \left( \beta, x^* \xi_\varphi \right) - \overline{ \left( \beta, x^* \xi_\varphi \right) } \\
&\quad+ (1-\lambda)^{-1} \norm{ (\1-x) \xi_\psi }^2 + (1-\lambda) \norm{ \alpha }^2 - \left( (\1-x) \xi_\psi, \alpha \right) - \overline{ \left( (\1-x) \xi_\psi, \alpha \right) } \\
&= \lambda \norm{ \beta }^2 + \lambda^{-1} \varphi\left( x x^* \right) + (1-\lambda)^{-1} \psi\left( (\1-x^*) (\1-x) \right) + (1-\lambda) \norm{ \alpha }^2 - 2 L(\1) \eqend{,}
\end{align}
and from this
\begin{align}
\frac{\varphi\left( x x^* \right)}{\lambda} + \frac{\psi\left( (\1-x^*) (\1-x) \right)}{1-\lambda} &\geq 2 L(\1) - \lambda \norm{ \beta }^2 - (1-\lambda) \norm{ \alpha }^2 \\
&\geq L(\1) = \chi(\1) - ( \varphi - \psi )_+(\1) - ( \varphi - \psi )_-(\1) \\
&= \frac{1}{2} \Bigl( \varphi(\1) + \psi(\1) - \norm{ \varphi - \psi } \Bigr)
\end{align}
as required.
\end{proof}

\end{document}